\newcommand{\FF}{{\mathbb{F}}}
\newcommand{\NN}{{\mathbb{N}}}
\newcommand{\QQ}{{\mathbb{Q}}}
\newcommand{\RR}{{\mathbb{R}}}
\newcommand{\ZZ}{{\mathbb{Z}}}
\newcommand{\fp}{{\mathfrak{p}}}
\newcommand{\fS}{{\mathfrak{S}}}
\newcommand{\cH}{{\mathcal{H}}}
\newcommand{\cK}{{\mathcal{K}}}
\newcommand{\cN}{{\mathcal{N}}}
\newcommand{\cO}{{\mathcal{O}}}
\newcommand{\cP}{{\mathcal{P}}}
\newcommand{\Aut}{{\operatorname{Aut}}}
\newcommand{\AutO}{{\operatorname{Aut}_\cO}}
\newcommand{\rk}{{\operatorname{rk}}}
\newcommand{\Cl}{{\operatorname{Cl}}}
\newcommand{\GL}{{\operatorname{GL}}}
\newcommand{\Sp}{{\operatorname{Sp}}}
\newcommand{\SPk}[3]{\Sp_{#1}^{(#2)}(\cO/\fp^{#3})}
\newcommand{\Op}[1]{\cO/\fp^{#1}}
\newcommand{\Z}[1]{\ZZ/p^{#1}\ZZ}
\newcommand{\cGp}{{\mathcal{G}_p}}
\def\pmod#1{~({\rm mod}~#1)}
\let\vhi=\varphi
\newtheorem{thm}{Theorem}[section]
\newtheorem{lem}[thm]{Lemma}
\newtheorem{conj}[thm]{Conjecture}
\newtheorem{cor}[thm]{Corollary}
\newtheorem{prop}[thm]{Proposition}
\theoremstyle{definition}
\newtheorem{exmp}[thm]{Example}
\newtheorem{defn}[thm]{Definition}
\theoremstyle{remark}
\newtheorem{rem}[thm]{Remark}
\begin{document}

\title[A class group heuristic based on 1-eigenspaces]{A class group heuristic
  based on the\\ distribution of 1-eigenspaces in matrix groups}

\date{\today}

\author{Michael Adam \and Gunter Malle}
\address{FB Mathematik, TU Kaiserslautern, Postfach 3049,
         67653 Kaisers\-lautern, Germany.}
\email{adam@mathematik.uni-kl.de}
\email{malle@mathematik.uni-kl.de}

\thanks{The authors gratefully acknowledge financial support by the Deutsche
  Forschungsgemeinschaft}
%%  Advanced Grant 291512.}

\keywords{Cohen--Lenstra heuristic, class groups, symplectic groups, 1-eigenspaces}

\subjclass[2010]{Primary 11R29, 15B33; Secondary  20P05}

\begin{abstract}
We propose a modification to the Cohen--Lenstra prediction for the distribution
of class groups of number fields, which should also apply when the base field
contains non-trivial roots of unity. The underlying heuristic derives from the
distribution of 1-eigenspaces in certain generalized symplectic groups
over finite rings. The motivation for that heuristic comes from the function
field case. We also give explicit formulas for the new predictions in several
important cases. These are in close accordance with known data.
\end{abstract}

\maketitle

%%\pagestyle{myheadings}
%%\markboth{for personal use only}{preliminary}

%%%%%%%%%%%%%%%%%%%%%%%%%%%%%%%%%%%%%%%%%%%%%%%%%%%%%%%%%%%%%%%%%%%%%%%%%
\section{Introduction}
The class group of a number field $K$ is defined as the quotient
$\Cl(K):= I_{K}/P_{K}$ of the group of fractional ideals $I_{K}$ by the
subgroup of principal ideals $P_{K}$ in $\cO_{K}$, the ring of integers of $K$.
Despite its importance, not much is known about the behavior of these objects.
For instance it is still an open question whether there exist
infinitely many number fields with trivial class group.
In the early 1980's H. Cohen and H.W. Lenstra \cite{CL83} proposed a
heuristic principle and presented some conjectural statements to describe the
distribution of $p$-parts of class groups of imaginary quadratic number fields.
This concept was extended by Cohen and J. Martinet \cite{CM90} to arbitrary
extensions of a fixed number field $K_0$. These results make predictions on
how often a given finite abelian $p$-group should appear as the $p$-part of
the class
group in a specified set of number fields. Cohen and Martinet formulated their
conjectures for all but a finite set of prime numbers for which the
behavior of $\Cl(K)_p$ is already controlled by genus theory. Only very few
instances of these conjectures have been proved (see \cite{Bh05,FK07} for
important recent progress).
\par
Seven years ago it was noticed by the second author \cite{Ma08,Ma10} that
in the situation when $p$th roots of unity are contained in the base field
$K_0$ the postulated probabilities by Cohen and Martinet do not match with
computational data. In particular this is always the case for $p=2$. In the
absence of theoretical arguments, on the basis of his computational data
the second author came up with a conjectural statement \cite[Conj. 2.1]{Ma10}
describing the behavior of $p$-parts of class groups in the presence of
$p$th roots of unity in $K_0$.
\par
Motivated by the works of J. Achter \cite{Ach06, Ach08}, who considered the
analogous problem on the function field side, we develop a method which can
be seen as a theoretical justification for the conjectures of Cohen and
Martinet and at the same time for the above-mentioned conjecture in
\cite{Ma10}. Here, the main objects are 1-eigenspaces of elements in what we
call the $m$-th symplectic groups $\Sp_{2n}^{(m)}(R)$ over certain finite rings
$R$ (see Definition~\ref{def:isymp}). The symplectic groups come up due to the
existence of the Tate pairing on class groups. The limit for
$n\rightarrow\infty$ of these eigenspace distributions should then give the
right predictions for class group distributions over number fields.
\par
We use the results proved by the first author \cite{Adam} to compute
distributions in these $m$-th symplectic groups (see Theorems~\ref{thm:level0}
and~\ref{thm:level1}) which allows us to make conjectural predictions
(see Conjecture~\ref{conj:Main}) about the behavior of $p$-parts of class
groups of number fields. For the case when the base field does not contain
$p$th roots of unity, these specialize to the original Cohen--Lenstra--Martinet
predictions.

%%%%%%%%%%%%%%%%%%%%%%%%%%%%%%%%%%%%%%%%%%%%%%%%%%%%%%%%%%%%%%%%%%%%%%%%%
\section{Cohen--Lenstra heuristic and roots of unity}
In this section we recall the heuristic principle introduced by Cohen and
Lenstra to predict the distribution of $p$-parts
of class groups of imaginary quadratic number fields and the generalization to
arbitrary number fields proposed by Cohen and Martinet. However, our
focus lies on a situation where these predictions seem to fail.

%%%%%%%%%%%%%%%%%%%%%%%%%%%%%%%%%%%%%
\subsection{The Cohen--Lenstra heuristic}
Cohen and Lenstra \cite{CL83} considered computational data on class groups
of imaginary quadratic number fields. They noticed that for example $\Z{2}$
appears much more often as class group than $\Z{}\times\Z{}$. They "explained"
this by the fact that the latter group has many more automorphisms than the
former. A straightforward conclusion from this observation was to equip every
group with the following weight.

\begin{defn}
 The \emph{CL-weight} of a finite group $G$ is
 $\omega(G):=\dfrac{1}{|\Aut(G)|}$.
\end{defn}

For integers $q,r\ge1$ we set $(q)_r:=\displaystyle\prod_{i=1}^{r} (1-q^{-i})$
and  $(q)_{\infty}:=\displaystyle\prod_{i=1}^{\infty} (1-q^{-i})$.

\begin{prop}
 For a prime~$p$, let $\cGp$ denote the set of all isomorphism classes of
 finite abelian $p$-groups. Then
 $$\sum_{G\in\cGp} \omega(G) = (p)_\infty^{-1}<\infty.$$
\end{prop}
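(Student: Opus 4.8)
The plan is to compute the mass $\sum_{G\in\cGp}1/|\Aut(G)|$ by realizing each summand as a count of finite-index subgroups of a free $\ZZ_p$-module and then letting the rank tend to infinity. The starting observation is that for a finite abelian $p$-group $G$ and any $n$, the surjections $\ZZ_p^n\twoheadrightarrow G$ fall into $\Aut(G)$-orbits that are in bijection with the subgroups $H\le\ZZ_p^n$ having $\ZZ_p^n/H\cong G$, and the $\Aut(G)$-action on the set of surjections is free; hence
$$\#\{H\le\ZZ_p^n:\ZZ_p^n/H\cong G\}=\frac{\#\operatorname{Sur}(\ZZ_p^n,G)}{|\Aut(G)|}.$$
I would therefore assemble two ingredients. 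First, the subgroup zeta function of $\ZZ_p^n$: summing over all finite-index subgroups weighted by index gives
$$\sum_{H\le\ZZ_p^n}[\ZZ_p^n:H]^{-s}=\prod_{i=0}^{n-1}\frac{1}{1-p^{i-s}},$$
which I would prove by an elementary count of sublattices via Hermite/Smith normal form. Second, the surjection count: since a homomorphism $\ZZ_p^n\to G$ is onto iff it is onto modulo $pG$ (Nakayama), counting the surjective maps $\FF_p^n\to G/pG\cong\FF_p^{\rk(G)}$ yields
$$\#\operatorname{Sur}(\ZZ_p^n,G)=|G|^n\prod_{i=0}^{\rk(G)-1}\bigl(1-p^{i-n}\bigr).$$

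Combining these, and noting that the finite quotients of $\ZZ_p^n$ are exactly the groups $G$ with $\rk(G)\le n$, I would write, for $s=n+u$,
$$\sum_{\rk(G)\le n}\frac{\prod_{i=0}^{\rk(G)-1}(1-p^{i-n})}{|\Aut(G)|\,|G|^{u}}=\prod_{i=0}^{n-1}\frac{1}{1-p^{\,i-n-u}}=\prod_{j=1}^{n}\frac{1}{1-p^{-j-u}},$$
the last step being the substitution $j=n-i$. The point of the shift $s=n+u$ is that the factor $|G|^n$ coming from the surjection count cancels the weight $|G|^{-s}$, leaving a sum that visibly stabilizes as $n\to\infty$.

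It then remains to take the two limits. For fixed $u>0$ each factor $\prod_{i=0}^{\rk(G)-1}(1-p^{i-n})$ lies in $[0,1]$ and increases monotonically to $1$ as $n\to\infty$, while the truncation $\rk(G)\le n$ only enlarges the index set; so by the monotone convergence theorem the left-hand side tends to $\sum_{G\in\cGp}|G|^{-u}/|\Aut(G)|$, and the right-hand side tends to $\prod_{j\ge1}(1-p^{-j-u})^{-1}$. Finally, letting $u\downarrow0$, the terms $|G|^{-u}$ increase to $1$ and the product converges to $(p)_\infty^{-1}$, so a second application of monotone convergence gives $\sum_{G\in\cGp}1/|\Aut(G)|=(p)_\infty^{-1}$, which in particular is finite. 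The main obstacle is the justification of this double limit; I expect it to go through cleanly because every quantity in sight is nonnegative and monotone, so the monotone convergence theorem applies at both stages and no delicate uniform estimate is needed. An alternative, more computational route would insert the Hillar--Rhea closed formula for $|\Aut(G)|$ indexed by partitions and recognize the resulting $q$-series as Euler's product, but the module-theoretic argument above avoids that bookkeeping.
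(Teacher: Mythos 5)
Your proof is correct. Note, however, that the paper does not actually prove this proposition: it simply cites \cite[Ex.~5.10]{CL83}, so there is no in-paper argument to compare against. Your route --- identifying $\#\{H\le\ZZ_p^n : \ZZ_p^n/H\cong G\}$ with $\#\operatorname{Sur}(\ZZ_p^n,G)/|\Aut(G)|$ via the free $\Aut(G)$-action, evaluating the subgroup zeta function $\prod_{i=0}^{n-1}(1-p^{i-s})^{-1}$, shifting $s=n+u$ so the surjection count cancels the index weight, and then passing to the limit twice by monotone convergence --- is a complete, self-contained proof; all the counting steps check out (Nakayama for the surjection count, Hermite/Smith normal form for the zeta function), and the monotonicity needed for both limits is genuine, since each factor $1-p^{i-n}$ increases in $n$ and $|G|^{-u}$ increases as $u\downarrow 0$. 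It is worth observing that your argument is in fact consonant with the Cohen--Lenstra machinery the paper does quote later: your surjection factor satisfies $\prod_{i=0}^{r-1}(1-p^{i-n})=(p)_n/(p)_{n-r}$, so the quantity you sum is exactly $|G|^{n}w_n(G)$ in the notation of the paper's Proposition~\ref{prop:CLprop}, and your intermediate identity $\sum_G |G|^{-u}|\Aut(G)|^{-1}=(p)_u/(p)_\infty$ is precisely the normalization underlying Theorem~\ref{thm:level0}. So your proof both fills the gap left by the citation and recovers, along the way, the $u$-weighted mass formula the paper uses elsewhere.
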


See \cite[Ex.~5.10]{CL83} for a proof. This immediately implies:

\begin{cor}   \label{cor:ver}
 The function given by
 $$P_{CL}:\cGp\longrightarrow[0,1],\ G\mapsto\dfrac{(p)_\infty}{|\Aut(G)|},
 $$
 defines a probability distribution on $\cGp$.
\end{cor}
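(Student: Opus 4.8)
The plan is to deduce this directly from the Proposition, since the stated sum $\sum_{G\in\cGp}\omega(G)=(p)_\infty^{-1}$ already contains all the analytic content; what remains is a normalization argument together with a check that the values genuinely lie in $[0,1]$.

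First I would establish that $(p)_\infty$ is a well-defined real number in the open interval $(0,1)$. Since $p\ge2$, every factor $1-p^{-i}$ lies in $(0,1)$, so the partial products are positive and strictly decreasing; the product converges to a strictly positive limit because $\sum_{i\ge1}p^{-i}<\infty$ prevents $\prod(1-p^{-i})$ from collapsing to $0$. Hence $0<(p)_\infty<1$, and in particular $(p)_\infty^{-1}$ is finite, which is exactly the finiteness asserted in the Proposition.

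Next I would verify the two defining properties of a probability distribution. For nonnegativity and boundedness, note that for each $G\in\cGp$ the quantity $|\Aut(G)|$ is a positive integer, so $P_{CL}(G)=(p)_\infty/|\Aut(G)|>0$; moreover $P_{CL}(G)\le(p)_\infty<1$, so every value lands in $[0,1]$ as required by the stated codomain. For total mass one, I would factor the constant $(p)_\infty$ out of the sum (legitimate since it is a series of nonnegative terms, hence unconditionally convergent) and apply the Proposition:
$$\sum_{G\in\cGp}P_{CL}(G)=(p)_\infty\sum_{G\in\cGp}\frac{1}{|\Aut(G)|}=(p)_\infty\cdot(p)_\infty^{-1}=1.$$

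There is essentially no genuine obstacle here beyond the Proposition itself; the only point deserving care is the legitimacy of pulling the scalar $(p)_\infty$ through the infinite sum, which holds because all summands are nonnegative, so the series converges unconditionally to its value and scalar multiplication commutes with the limit of the partial sums.
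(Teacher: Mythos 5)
Your proposal is correct and follows exactly the route the paper intends: the paper treats the corollary as an immediate consequence of the proposition, namely normalizing the weights $\omega(G)=1/|\Aut(G)|$ by the constant $(p)_\infty$ so that the total mass becomes $1$. Your additional checks (positivity of the infinite product $(p)_\infty$ and the legitimacy of factoring the constant out of a nonnegative series) are the minor details the paper leaves implicit, and they are handled correctly.
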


It turned out that $P_{CL}$ plays the role of a kind of natural distribution
on the set of finite abelian $p$-groups and occurs also in many other
contexts. See \cite{Le10} for an overview of this issue.

Write $\cK$ for the set of all imaginary quadratic extensions of $\QQ$ inside
a fixed algebraic closure of $\QQ$. Then given a finite abelian $p$-group $G$
we set
$$\cN(G):=\lim_{x\rightarrow\infty}\dfrac{|\{K\in\cK : |d_K|\leq x,\
  \Cl(K)_p\cong G\}|}{|\{K\in\cK : |d_K|\leq x\}|}$$
(if it exists), where $d_K$ denotes the field discriminant of the number
field $K$.

Using this notation we can formulate the first conjecture stated by Cohen and
Lenstra which makes predictions about the behavior of $p$-parts of class
groups of imaginary quadratic number fields.

\begin{conj}[Cohen and Lenstra]   \label{conj:CL}
 Let $p$ be an odd prime. Then for a finite abelian $p$-group $G$ the limit
 $\cN(G)$ exists and is equal to $P_{CL}(G)$.
\end{conj}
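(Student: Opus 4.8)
The plan is to prove the \emph{moment} form of the statement rather than to compute $\cN(G)$ for each $G$ individually. For a fixed finite abelian $p$-group $H$, let
$$M_H(\mu):=\sum_{G\in\cGp}\mu(G)\,\#\operatorname{Sur}(G,H)$$
be the $H$-th surjection moment of a probability distribution $\mu$ on $\cGp$, where $\operatorname{Sur}(G,H)$ denotes the set of surjective homomorphisms $G\twoheadrightarrow H$. A direct generating-function computation, combining the identity $\sum_{G}\#\operatorname{Sur}(G,H)/|\Aut(G)|=\sum_{G}1/|\Aut(G)|$ with the Proposition above, shows that $M_H(P_{CL})=1$ for every $H$. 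Since these moments are bounded (they do not grow with $H$), a moment-determinacy result for distributions on $\cGp$ guarantees that $P_{CL}$ is the \emph{unique} probability distribution whose surjection moments are all equal to $1$. It therefore suffices to show that, as $x\to\infty$, the averaged moment
$$\frac{1}{|\{K\in\cK:|d_K|\le x\}|}\sum_{\substack{K\in\cK\\|d_K|\le x}}\#\operatorname{Sur}(\Cl(K)_p,H)\longrightarrow 1$$
for every finite abelian $p$-group $H$, together with enough uniformity that the empirical distributions remain tight and the determinacy result applies.

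Next I would translate the averaged moment into a field-counting problem. By Artin reciprocity a surjection $\Cl(K)_p\twoheadrightarrow H$ corresponds to an everywhere unramified abelian extension $L/K$ inside the Hilbert class field with $\Gal(L/K)\cong H$, together with a choice of isomorphism. Hence the numerator above counts, with automorphism weights, towers $L/K/\QQ$ in which $K$ is imaginary quadratic and $L/K$ is unramified with group $H$. The whole problem thus becomes an asymptotic count of such towers, organized by the discriminant $|d_K|$ of the quadratic base: one tries to parametrize the degree-$|H|$ fields $L$ in question and to extract the main term by a sieve or Tauberian argument, verifying that the count per quadratic base averages to exactly $1$.

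The main obstacle is precisely this asymptotic count, and it is where the conjecture remains open in general. The moment $H=1$ is vacuous. The first nontrivial case, $p=3$ with $H\cong\ZZ/3\ZZ$, is the theorem of Davenport and Heilbronn: an unramified cyclic cubic extension of an imaginary quadratic field arises from a non-Galois cubic field of the same discriminant, and counting these via the parametrization by binary cubic forms yields that the average of $\#\Cl(K)[3]$ equals $2$, i.e. that the $H$-moment is $1$. For larger $p$ and larger $H$ no unconditional parametrization of the relevant unramified $H$-extensions is known with error terms sharp enough to isolate the main term, so establishing all the moments—and doing so uniformly enough to justify the tightness and interchange-of-limits steps in the moment method—is the crux. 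I therefore expect the difficulty to lie not in the group-theoretic or probabilistic bookkeeping, which is routine, but in the analytic number theory of counting unramified $H$-extensions across a whole family of imaginary quadratic fields.
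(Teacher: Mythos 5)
There is no ``paper's own proof'' to compare against here: the statement you were given is Conjecture~\ref{conj:CL}, which the paper states as a \emph{conjecture} and does not prove. Indeed, the introduction of the paper stresses that only very few instances of the Cohen--Lenstra--Martinet conjectures have ever been proved (citing \cite{Bh05,FK07}), and the whole point of the paper is to give a new \emph{heuristic} justification (via 1-eigenspace distributions in $m$-th symplectic groups) for conjectures of this type, not a proof. So the honest assessment of your proposal is that it is a strategy outline for an open problem, not a proof of the statement.

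Within that framing, your outline is the standard and correct one --- the moment method. Your bookkeeping steps are sound: the identity $\sum_G \#\operatorname{Sur}(G,H)/|\Aut(G)|=\sum_G 1/|\Aut(G)|$ does give $M_H(P_{CL})=1$ for all $H$; moments identically equal to $1$ do determine the distribution uniquely (this kind of determinacy result is known, e.g.\ in the circle of ideas around \cite{EVW}); and the translation of moments into counting unramified $H$-extensions via class field theory, with Davenport--Heilbronn handling $H\cong\ZZ/3\ZZ$, is accurate. But the gap you name is the entire content of the conjecture: for every $H$ beyond $\ZZ/3\ZZ$ (note that \cite{FK07} concerns $p=2$, which is excluded by the hypothesis that $p$ is odd), no one can establish the averaged moment asymptotics, nor the uniformity needed to pass from moments of the empirical averages to convergence of the distributions. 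Since that step is missing, the proposal does not prove the statement --- nor could any currently known argument; the statement remains open, which is exactly why the paper (and Cohen--Lenstra before it) presents it as a conjecture supported by heuristics and computation rather than by proof.
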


Cohen and Martinet \cite{CM90} continued the ideas presented above and
formulated an analoguous conjecture for arbitrary extensions
of a number field $K_0$. In order to present their result we need to
introduce the following set up.

\begin{defn}
 We call a triple $\Sigma:=(H,K_0,\sigma)$ a \emph{situation}, where
 \begin{enumerate}[(1)]
  \item $H\leq \fS_n$ is a transitive permutation group of degree $n\geq 2$,
  \item $K_0$ is a number field, and
  \item $\sigma$ is a signature which may occur as signature of a degree $n$
   extension $K/K_0$ with Galois group (of the Galois closure) permutation
   isomorphic to $G$.
 \end{enumerate}
 For a situation $\Sigma=(H,K_0,\sigma)$ we let $\cK(\Sigma)$ denote the set
 of number fields $K/K_0$ (inside a fixed algebraic closure) as described
 in~(3).
\end{defn}

To a given situation $\Sigma$ one can attach a non-negative rational number
$u(\Sigma)$, the \textit{unit rank} and a ring $\cO(\Sigma)$. We give the
definition of these terms in a very important special case, the general case
is explained in \cite[Chap.~6]{CM90}.

We write $\chi$ for the permutation character attached to the embedding
$H\le\fS_n$. It contains the trivial character $1_H$ exactly once, and we let
$\chi_1:=\chi-1_H$. Let $\QQ[H]$ denote the rational group ring of $H$. We
make the following two assumptions:
\begin{itemize}
 \item[(1)] $\chi_1$ is the character of an irreducible (but not necessarily
  absolutely irreducible) $\QQ[H]$-module,
 \item[(2)] any absolutely irreducible constituent $\vhi$ of $\chi_1$ has
  Schur index~1, that is, $\vhi$ is the character of a representations of $H$
  over the field of values of $\vhi$.
\end{itemize}
Then $\cO(\Sigma)$ is defined to be the ring of integers of the field of
values of any absolutely irreducible constituent $\vhi$ of $\chi_1$. (This is
an abelian, hence normal extension of $\QQ$, and thus independent of the
choice of constituent $\vhi$ by~(1) above.)
\par
Now for $K\in\cK(\Sigma)$ let $L$ denote the Galois closure of $K/K_0$.
Denote by $E_L$ the group of units of the ring of integers of $L$. Then the
action of $H$ makes $E_L\otimes_\ZZ\QQ$ into a $\QQ[H]$-module, whose character
we denote by $\chi_E$. (It can be computed explicitly in terms of the
signature $\sigma$ of $L/K_0$ by the theorem of Herbrandt, see
\cite[Thm.~6.7]{CM90}.) Then
$$u(\Sigma):=\langle\chi_E,\vhi\rangle$$
(see \cite[p.~63]{CM90}), the scalar product of the character $\chi_E$ with an
absolutely irreducible constituent $\vhi$ of $\chi_1$. Since $\chi_E$ is
rational, this does not depend on the choice of $\vhi$ (nor does it depend on
the choice of $K\in\cK(\Sigma)$).

Given a situation $\Sigma=(H,K_0,\sigma)$ and a finite $\fp$-torsion
$\cO=\cO(\Sigma)$-module $G$, where $\fp\trianglelefteq\cO$ is a prime ideal,
we set
$$\cN(\Sigma, G):=\lim_{x\rightarrow\infty}\dfrac{|\{K\in\cK(\Sigma) :
  d_{K/K_0}\leq x,\ \Cl(K/K_0)_\fp\cong G\}|}
  {|\{K\in\cK(\Sigma) : d_{K/K_0}\leq x\}|}$$
(if it exists), where $d_{K/K_0}$ denotes the norm of the discriminant of
$K/K_0$ and $\Cl(K/K_0)$ is the relative class group of $K/K_0$ (the kernel
in the class group of $K$ of the norm map from $K$ to $K_0$). With this we
can present the conjecture of Cohen and Martinet predicting the distribution
of $\fp$-parts of relative class groups of number fields over~$K_0$.

\begin{conj}[Cohen and Martinet]   \label{conj:CM}
 Let $G$ be a finite $\fp$-tosion $\cO$-module. Then $\cN(\Sigma, G)$
 exists and is given by
 $$\dfrac{(q)_\infty}{(q)_u}\cdot\dfrac{1}{|G|^u|\Aut_\cO(G)|},$$
 where $u:= u(\Sigma)$, $q:=|\cO/\fp|$, and $\AutO(G)$ denotes the group
 of $\cO$-automor\-phisms of $G$ 
\end{conj}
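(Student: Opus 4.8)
The plan is to establish the existence of $\cN(\Sigma,G)$ and its value \emph{simultaneously}, via the method of moments rather than by attacking the distribution directly. For a finite $\fp$-torsion $\cO$-module $A$, the $A$-moment of the conjectured measure $P(G)=\frac{(q)_\infty}{(q)_u}\frac{1}{|G|^u|\AutO(G)|}$ is $\sum_G P(G)\,\#\operatorname{Sur}_\cO(G,A)$, and a short generating-function computation using the $\cO$-module analogue of the Proposition, $\sum_G 1/|\AutO(G)|=(q)_\infty^{-1}$, shows this moment equals $|A|^{-u}$. The first step is therefore to reduce the conjecture to proving that the arithmetic averages
$$M_x(A):=\frac{1}{\#\{K\in\cK(\Sigma):d_{K/K_0}\le x\}}\sum_{\substack{K\in\cK(\Sigma)\\ d_{K/K_0}\le x}}\#\operatorname{Sur}_\cO\!\big(\Cl(K/K_0)_\fp,\,A\big)$$
converge to $|A|^{-u}$ as $x\to\infty$, for every such $A$. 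Because the target moments $|A|^{-u}$ are bounded by $1$, the associated moment problem is determinate, so a moment-determinacy theorem upgrades matching moments to convergence in distribution; this yields both the existence of the limit $\cN(\Sigma,G)$ and the identification $\cN(\Sigma,G)=P(G)$, with no escape of mass.

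The second step is to interpret $\#\operatorname{Sur}_\cO(\Cl(K/K_0)_\fp,A)$ field-theoretically. By class field theory each such surjection corresponds to an unramified abelian extension $M/K$ with residue module $A$ carrying the prescribed $H$-action, so $M_x(A)$ counts $H$-towers $M/K/K_0$ with controlled ramification. First summing over $M$ and then applying a Chebotarev-type equidistribution together with Malle's counting of extensions with fixed Galois and ramification data, one expects a clean main term whose order of magnitude is governed by the number of free parameters. The unit rank $u$ should enter precisely here, through the global Euler characteristic and the Galois cohomology of the unit group $E_L$: the computation of $\chi_E$ that defines $u(\Sigma)$ is exactly the input measuring how units obstruct the existence of such surjections, and this is what produces the factor $|A|^{-u}$ rather than $1$.

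The third, combinatorial, step matches the matrix-group side. In the plain Cohen--Martinet case the relevant model is a Haar-random $\cO_\fp$-linear endomorphism whose $1$-eigenspace has rank $u$; its cokernel is the model for $\Cl(K/K_0)_\fp$, and a direct generating-function argument shows its $A$-moments are $|A|^{-u}$, matching Step~1 (it is exactly here that, in the presence of $\fp$th roots of unity, the $m$-th symplectic groups of this paper replace $\GL$, because the Tate pairing forces a symplectic constraint on the admissible endomorphisms). The genuine obstacle is Step~2 over \emph{number} fields: unlike the function field analogue — where $\Cl$ is the cokernel of $\operatorname{Frob}-1$ on the Tate module of a Jacobian, the geometric monodromy is a symplectic group via the Weil pairing, and Deligne's equidistribution theorem makes the limiting moments rigorous — no unconditional count of number fields with the required local conditions is known for general $\Sigma$. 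Thus the realistic target of this strategy is the function field statement and the low-degree cases (cubic $S_3$-fields via Davenport--Heilbronn and Bhargava, or abelian $H$ via class field theory and Fouvry--Klüners); the full number field statement as worded remains conjectural, and I would present the function field equidistribution as the theoretical justification rather than a complete proof.
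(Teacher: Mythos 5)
The statement you set out to prove is Conjecture~\ref{conj:CM}, and the first thing to say is that the paper contains no proof of it: it is an open conjecture, recorded with pointers to \cite{CM90} and \cite{CL83} for its formulation and heuristic motivation only, and only very special cases are known (the paper cites \cite{Bh05,FK07}). So there is no argument of the paper's to compare yours against, and your closing admission --- that the number field statement as worded remains conjectural, with only the function field analogue and certain low-degree cases accessible --- is exactly the correct assessment and matches the paper's own stance: the paper's goal is to give a \emph{heuristic} justification of this conjecture (and a correction of it in the presence of roots of unity), not a proof.

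As a strategy sketch, your outline is reasonable and is essentially the route taken in the literature for the partial results: reduce to surjection moments, interpret $\#\operatorname{Sur}_\cO(\Cl(K/K_0)_\fp,A)$ via class field theory as counting unramified extensions, and prove equidistribution; this is how \cite{EVW} and \cite{Ga12} handle the function field side, and your Step~3 is in substance the paper's own contribution in Sections~3--5: the Tate pairing forces the $m$-th symplectic groups $\Sp_{2n}^{(m)}$ when roots of unity are present, and $\GL_{2n}=\Sp_{2n}^{(0)}$ otherwise. Two cautions if you develop the sketch further. First, moment determinacy does not follow from the bare bound $|A|^{-u}\le 1$ by classical moment theory; you need the surjection-moment uniqueness theorems of Ellenberg--Venkatesh--Westerland/Wood type, adapted to $\fp$-torsion $\cO$-modules, and these should be invoked rather than asserted. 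Second, the paper does not arrive at the displayed formula through moments at all: it obtains it as the $u$-fold $u$-probability iterate $P_{0,q}^{(u)}$ of the $\GL$ eigenspace distribution (Theorem~\ref{thm:level0}, proved by induction from Proposition~\ref{prop:CLprop}), so to interface your sketch with the paper's framework you would still need to check that the measure determined by the moments $|A|^{-u}$ coincides with $P_{0,q}^{(u)}$ --- true, but a step your proposal does not address.
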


For this consult \cite[Chap.~6]{CM90} and \cite[Ex.~5.9]{CL83}.
\par
In the following years it was noticed by many people that this conjecture
cannot hold for all primes which were originally allowed by Cohen and Martinet.
So we need to lay the focus a bit more on the set of bad primes. The reason
why Cohen and Martinet \cite{CM90} excluded the primes that divided the
extension degree $n=(K/K_0)$ is that by genus theory these primes are
indeed bad, meaning that the conjecture cannot be true for such primes. 
A few years later and after more
computations Cohen and Martinet \cite{CM94} were forced to enlarge the set of
bad primes by those which divide the order of the common Galois group $H$ of
the situation $\Sigma$. For the bad behavior of these primes one can find
theoretical arguments in the manner of genus theory, too. Much later it was
noticed by the second author \cite{Ma08,Ma10} on the basis of extensive
numerical support that the presence of $p$th roots of unity in the base field
$K_0$ does play a role for the distribution of $p$-parts of class groups.
In an attempt to explain this deviation, one is led to consider
the analogous situation on the side of function fields.

%%%%%%%%%%%%%%%%%%%%%%%%%%%%%%%%%%%%%
\subsection{The function field case}
Three years after the publication of the celebrated paper of Cohen and Lenstra
\cite{CL83} the analoguous case on the function field side was considered.
Given two distinct prime numbers $l$ and $p$, E.~Friedman and L.C.~Washington
\cite{FW89} linked the distribution of $p$-parts of the divisor class groups
of degree 0 of quadratic extensions of $\FF_l(t)$ to equi-distributed sequences
of matrices over finite fields.
Achter \cite{Ach06,Ach08} took up this suggestion and proved that, to put it
simply, the distribution of class groups of quadratic function fields and the
distribution of elements in symplectic similitude groups are the same. More
precisely, let $\cH_{g}(\FF_l)$ denote the set of monic separable
polynomials of degree $g$ over the finite field $\FF_l$ and let $C_{g,f}$ be
the hyperelliptic curve of genus $g$ defined by $f\in\cH_{2g+1}(\FF_l)$. With
this for a finite abelian $p$-group $G$ we define the proportion
$$\beta_p(g,l,G):=\dfrac{|\{f\in\cH_{2g+1}(\FF_l) :
   \Cl^0(C_{g,f})_p\cong G\}|}{|\cH_{2g+1}(\FF_l)|},$$
where $\Cl^0(C_{g,f})_p$ is the Sylow $p$-subgroup of the Jacobian of
$C_{g,f}$. Among other things Achter showed for a finite elementary abelian
$p$-group $G$ that
$$\lim_{l\rightarrow\infty} \beta_p(g,l,G) =
  \dfrac{|\{h\in\Sp_{2g}(\FF_p) : \ker(h-\mathbf{1}_{2g})\cong G\}|}
  {|\Sp_{2g}(\FF_p)|}.$$
Later Achter considered symplectic similitude groups over finite rings and
established a correspondence between eigenspace distributions in such groups
and the distribution of the Jacobian of hyperelliptic curves. (Consult
\cite[Thm.~3.1]{Ach08} for details.) However, Achter did not compute the
distributions in the symplectic similitude groups explicitly. This step has
been done by J.S.~Ellenberg, A.~Venkatesh and C.~Westerland \cite{EVW} and
one consequence of their work is that the distribution of the $p$-parts of
divisor class groups of degree $0$ of quadratic function fields over $\FF_l$
with $p{\not|}(l-1)$ (which corresponds on the
number field side to the case where $p$th roots of unity are not contained
in the base field) matches the distributions predicted by Cohen--Lenstra and
Friedman--Washington. The case where $p|(l-1)$ was treated by D.~Garton,
a student of Ellenberg, in his recent thesis \cite{Ga12}, where he presents
a distribution for this situation \cite[Thm.~7.2.3]{Ga12}. Following
\cite[\S3]{Ma10} the philosophy should now be that the characteristic~0 number
field case can be obtained as the limit for the genus $g\rightarrow\infty$ of
the characteristic~$l$ function field cases.

%%%%%%%%%%%%%%%%%%%%%%%%%%%%%%%%%%%%%%%%%%%%%%%%%%%%%%%%%%%%%%%%%%%%%%%%%
\section{Distribution of 1-eigenspaces in matrix groups}
The ideas and results from the function field case yield the motivation for
a more thorough investigation of eigenspaces in suitable finite matrix groups.
Since all finite abelian $p$-groups should occur as such eigenspaces, and
should carry an alternating form coming from the Tate pairing, we are led to
consider symplectic groups over finite rings that are not integral domains.
We introduce these groups and recall the crucial results shown by the first
author in \cite{Adam}.

\begin{defn}   \label{def:isymp}
 Let $\cO$ be the ring of integers of a number field and let
 $\fp\trianglelefteq\cO$ be a non-zero prime ideal. Given natural numbers
 $n$ and $m\le f$ we define the \emph{$m$-th symplectic group} over
 the ring $\Op{f}$ as
 $$\Sp_{2n}^{(m)}(\Op{f}):=\{h\in\GL_{2n}(\Op{f})\mid
    h^tJ_{n}h\equiv J_{n} \pmod{\fp^{m}}\},$$
 where $\GL_{2n}(\Op{f})$ is the general linear group and
 $J_{n}:=\begin{pmatrix}  0& \mathbf{1}_{n} \\  -\mathbf{1}_n& 0\end{pmatrix}
  \in\GL_{2n}(\Op{f})$.
\end{defn}

\begin{rem}
 From the definition we obtain the following descending chain of groups:
 $$\GL_{2n}(\Op{f})=\Sp_{2n}^{(0)}(\Op{f})\supseteq\Sp_{2n}^{(1)}(\Op{f})
   \supseteq\dots\supseteq\Sp_{2n}^{(f)}(\Op{f})=\Sp_{2n}(\Op{f}),$$
 where $\Sp_{2n}(\Op{f})$ denotes the usual symplectic group over $\Op{f}$.
 Thus, the $m$-th symplectic groups in a sense `interpolate' between the
 general linear and the symplectic group over the non-integral domain $\Op{f}$.
\end{rem}

\begin{prop}
 Let $q:=|\cO/\fp|$ and $f\in\NN$. Then:
 \begin{enumerate}[\rm(a)]
  \item $|\Sp_{2n}^{(0)}(\Op{f})|=|\GL_{2n}(\Op{f})|
    =q^{4n^2(f-1)}\cdot|\GL_{2n}(\FF_{q})|$.
  \item $|\Sp_{2n}^{(f)}(\Op{f})|=|\Sp_{2n}(\Op{f})|
    =q^{(2n^2+n)(f-1)}\cdot|\Sp_{2n}(\FF_{q})|$.
  \item $|\Sp_{2n}^{(m)}(\Op{f})|=q^{4n^2(f-m)}\cdot|\Sp_{2n}(\Op{m})|$
   for $1\leq m\leq f-1$.
 \end{enumerate}
\end{prop}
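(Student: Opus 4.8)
The plan is to treat all three cardinalities uniformly by analysing the reduction homomorphisms in the tower $\Op{f}\to\Op{i}$ and counting orders as $|{\rm image}|\cdot|{\rm kernel}|$. Throughout I use that $\Op{f}$ is a finite local ring with residue field $\FF_q$ and maximal ideal generated by (the image of) a uniformizer $\pi$, so that for $i\ge1$ the additive group $\fp^i/\fp^f$ has $q^{f-i}$ elements (multiplication by $\pi^i$ identifies it with $\Op{f-i}$). Consequently the set of matrices $\mathbf{1}_{2n}+M$ with $M\in M_{2n}(\fp^i/\fp^f)$, $i\ge1$, has cardinality $q^{4n^2(f-i)}$, and each such matrix is automatically invertible since its determinant is $\equiv1\pmod{\fp}$, hence a unit.

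For part~(a) the congruence is imposed modulo $\fp^0=\cO$ and therefore holds vacuously, so $\SPk{2n}{0}{f}=\GL_{2n}(\Op{f})$. Reduction modulo $\fp$ gives a surjection $\GL_{2n}(\Op{f})\to\GL_{2n}(\FF_q)$ (a lift of an invertible matrix stays invertible, again because the determinant is a unit), and by the observation above its kernel is exactly $\{\mathbf{1}_{2n}+M : M\in M_{2n}(\fp/\fp^f)\}$, of order $q^{4n^2(f-1)}$. Multiplying the two orders gives~(a).

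Part~(c) is the structural core but is in fact elementary. I would use reduction modulo $\fp^m$, which maps $\SPk{2n}{m}{f}$ into $\GL_{2n}(\Op{m})$. Since the defining congruence is imposed only modulo $\fp^m$, the image lands in, and equals, $\Sp_{2n}(\Op{m})$: for any symplectic $g\in\Sp_{2n}(\Op{m})$ an arbitrary matrix lift $\tilde g\in M_{2n}(\Op{f})$ is invertible and satisfies $\tilde g^tJ_n\tilde g\equiv J_n\pmod{\fp^m}$ simply because $\tilde g\equiv g\pmod{\fp^m}$, so $\tilde g\in\SPk{2n}{m}{f}$ reduces to $g$, proving surjectivity. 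For the kernel, writing an element as $\mathbf{1}_{2n}+M$ with $M\in M_{2n}(\fp^m/\fp^f)$, the products $M^tJ_n$, $J_nM$ and $M^tJ_nM$ all have entries in $\fp^m$, so the congruence modulo $\fp^m$ holds automatically; the kernel is therefore the full group of such matrices, of order $q^{4n^2(f-m)}$. This yields~(c), which reduces the count to the symplectic group over $\Op{m}$, i.e. to~(b) with $f$ replaced by $m$.

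Part~(b) is where the real work lies. Here the congruence modulo $\fp^f$ is the honest symplectic condition, so $\SPk{2n}{f}{f}=\Sp_{2n}(\Op{f})$, and I would filter by the congruence subgroups $K_i:=\ker\bigl(\Sp_{2n}(\Op{f})\to\Sp_{2n}(\Op{i})\bigr)$ for $1\le i\le f$. Writing $h=\mathbf{1}_{2n}+\pi^iC\in K_i$, the symplectic relation becomes $\pi^i(C^tJ_n+J_nC)+\pi^{2i}C^tJ_nC=0$, and reduction modulo $\fp^{i+1}$ shows that the class of $h$ in $K_i/K_{i+1}$ is governed by $\bar C\in M_{2n}(\FF_q)$ subject to $\bar C^tJ_n+J_n\bar C=0$, i.e. by $\bar C\in\mathfrak{sp}_{2n}(\FF_q)$. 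The main obstacle is to show that every such $\bar C$ is genuinely attained (the quadratic term $\pi^{2i}C^tJ_nC$ must be corrected away in the higher layers) and that the reduction onto $\Sp_{2n}(\FF_q)$ is surjective; both follow from smoothness of the symplectic group scheme over $\cO$, which makes each $\Sp_{2n}(\Op{i+1})\to\Sp_{2n}(\Op{i})$ surjective with kernel isomorphic to the additive group $\mathfrak{sp}_{2n}(\FF_q)$. Since $\dim_{\FF_q}\mathfrak{sp}_{2n}=2n^2+n$, each of the $f-1$ layers contributes a factor $q^{2n^2+n}$, and telescoping together with $|{\rm image}|=|\Sp_{2n}(\FF_q)|$ gives~(b). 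One may avoid scheme-theoretic language by exhibiting explicit lifts of symplectic transvections, which generate $\Sp_{2n}$, but the smoothness argument is the most economical.
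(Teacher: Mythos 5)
Your argument is correct, and there is nothing in the paper to compare it against in detail: the paper proves this proposition purely by citation to \cite[Prop.~2.7]{Adam}, so your proposal supplies a self-contained proof where the paper gives none. Your strategy --- computing each order as $|\mathrm{image}|\cdot|\mathrm{kernel}|$ for the reduction maps, with (a) and (c) being elementary because the congruence kernel $\{\mathbf{1}_{2n}+M : M\in M_{2n}(\fp^m/\fp^f)\}$ sits entirely inside $\SPk{2n}{m}{f}$, and (b) handled by the filtration $K_i=\ker\bigl(\Sp_{2n}(\Op{f})\to\Sp_{2n}(\Op{i})\bigr)$ with layers $K_i/K_{i+1}\cong\mathfrak{sp}_{2n}(\FF_q)$ --- is the standard one and almost certainly what Adam's cited proposition does. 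All the individual steps check out: the unit-determinant argument for invertibility of lifts, the surjectivity of $\SPk{2n}{m}{f}\to\Sp_{2n}(\Op{m})$ via arbitrary matrix lifts, and the vanishing of the quadratic term $\pi^{2i}C^tJ_nC$ modulo $\fp^{i+1}$ when $i\ge1$. The one place where you lean on outside machinery is smoothness of the symplectic group scheme to get surjectivity of $\Sp_{2n}(\Op{i+1})\to\Sp_{2n}(\Op{i})$ and fullness of each layer. If you want the proof entirely elementary, note that the error term $E$ defined by $\tilde h^tJ_n\tilde h=J_n+\pi^iE$ is antisymmetric (both $J_n$ and $\tilde h^tJ_n\tilde h$ are), and the linear map $D\mapsto D^tJ_n+J_nD$ from $M_{2n}(\FF_q)$ to antisymmetric matrices is surjective by dimension count (its kernel $\mathfrak{sp}_{2n}$ has dimension $2n^2+n$, and $4n^2-(2n^2+n)=\binom{2n}{2}$), so the correction $\tilde h(\mathbf{1}_{2n}+\pi^iD)$ always exists; this replaces the scheme-theoretic input with two lines of linear algebra and simultaneously proves both surjectivity of each reduction and surjectivity of $K_i/K_{i+1}\to\mathfrak{sp}_{2n}(\FF_q)$.
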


\begin{proof}
See \cite[Prop.~2.7]{Adam}.
\end{proof}

We are interested in the following limit proportion of elements with a given
1-eigenspace
$$P_{m,q}(G):=
%%  \lim_{n\to\infty}P_{\Sp^{(m)},n,q,f}(G):=
  \lim_{n\to\infty}\dfrac{|\{g\in \SPk{2n}{m}{f} :
     \ker(g-\mathbf{1}_{2n})\cong G\}|}{|\SPk{2n}{m}{f}|},$$
where $G$ denotes a finite $\fp$-torsion $\cO$-module. Note that this is
trivially a probability measure on the set of finite $\fp$-torsion
$\cO$-modules. For $m\le 2$ this distribution was computed explicitely in
\cite[Thms.~3.8 and~3.23]{Adam}, respectively \cite[Rem.~4.27]{Diss}:

\begin{thm}   \label{thm:distr}
 Let $G$ be a finite $\fp$-torsion $\cO$-module annihilated by $\fp^{f-1}$.
 Then:
 \begin{enumerate}[\rm(a)]
  \item $P_{0,q}(G)=\dfrac{(q)_\infty}{|\AutO(G)|}$,
  \item $P_{1,q}(G)=\dfrac{(q)_\infty}{(q^2)_\infty}
        \cdot \dfrac{(q)_r\cdot q^{\binom{r}{2}}}{|\AutO(G)|}$ with
   $r=\rk_\fp(G)$,
  \item $P_{2,q}(G)=\dfrac{(q)_\infty}{(q^2)_\infty}
        \cdot \dfrac{(q)_{r-s}(q)_s\cdot q^{\binom{r}{2}+\binom{s}{2}}}
        {(q^2)_t\,|\AutO(G)|}$ with $r=\rk_\fp(G)$, $s=\rk_{\fp^2}(G)$,
        $t=\lfloor\frac{r-s}{2}\rfloor$,
 \end{enumerate}
 where $q:=|\cO/\fp|$.
\end{thm}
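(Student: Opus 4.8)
The plan is to compute at each finite level the count
$$N_m(G,n):=|\{g\in\SPk{2n}{m}{f}:\ker(g-\mathbf 1_{2n})\cong G\}|,$$
divide by the known order $|\SPk{2n}{m}{f}|$ from the preceding Proposition, and let $n\to\infty$. The structural fact I would establish first is that $\SPk{2n}{m}{f}$ is precisely the preimage of $\Sp_{2n}(\Op{m})$ under reduction $\GL_{2n}(\Op{f})\to\GL_{2n}(\Op{m})$, which is exactly what the order formulas of the Proposition encode. This lets me regard the symplectic constraint as imposed only modulo $\fp^m$, with the top $f-m$ levels free; in particular for $m=0$ there is no constraint and I am studying the fixed module of a uniform $g\in\GL_{2n}(\Op{f})$.

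It is cleaner to work with $\operatorname{coker}(g-\mathbf 1)=V/(g-\mathbf 1)V$, where $V=(\Op{f})^{2n}$. For $m=0$ the substitution $g\mapsto g^t$ identifies the distribution of $\ker(g-\mathbf 1)$ with that of $\operatorname{coker}(g-\mathbf 1)\cong\ker(g^t-\mathbf 1)$, so the two agree. For $m\ge1$ the form does the work: if $g^tJ_ng\equiv J_n\pmod{\fp^m}$, then for $gv=v$ and any $w$ one gets $v^tJ_n(g-\mathbf 1)w\equiv0\pmod{\fp^m}$, so $\ker(g-\mathbf 1)$ and $\operatorname{im}(g-\mathbf 1)$ are mutually orthogonal modulo $\fp^m$. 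Hence modulo $\fp^m$ the fixed module and the cokernel are dual under the alternating pairing, forcing an alternating form on the graded layers $G/\fp G$ and $\fp G/\fp^2G$. Counting these forms is what should produce the factors $(q)_rq^{\binom r2}$ at the first level and, at the second, $(q)_{r-s}(q)_sq^{\binom r2+\binom s2}$ together with $(q^2)_t$: an alternating form on the $(r-s)$-dimensional layer has even rank $2t$ with $t\le\lfloor(r-s)/2\rfloor$, and $(q^2)_t$ is the corresponding density.

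For the evaluation I would use surjection moments. For a test $\cO$-module $H$,
$$E_{g}\bigl[\#\operatorname{Sur}_\cO(\operatorname{coker}(g-\mathbf 1),H)\bigr]=\sum_{\psi\colon V\twoheadrightarrow H}\operatorname{Prob}_g\bigl((g-\mathbf 1)V\subseteq\ker\psi\bigr),$$
and the inner probability is just the proportion of $g\in\SPk{2n}{m}{f}$ acting trivially on $V/\ker\psi\cong H$. Using the preimage description this proportion is an explicit product of local indices, and summing over $\psi$ (equivalently over submodules with quotient $H$, weighted by the form-compatibility from the duality above) gives closed expressions for the moments. I would then invoke the fact that in this setting the surjection moments determine the measure, and recover $P_{m,q}(G)$ by the usual Cohen--Lenstra inversion, checking that the $n\to\infty$ limits of the normalisations from the Proposition convert the moments into the stated densities, with $(q)_\infty/(q^2)_\infty$ appearing as the limiting ratio of symplectic to general-linear normalisation.

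The main obstacle is the combinatorics at $m=2$. Because the symplectic condition is now imposed modulo $\fp^2$, the fixed module sees two graded layers at once and the induced alternating form on the $(r-s)$-dimensional layer may be degenerate; extracting the radical of that form is exactly what introduces the floor and the factor $(q^2)_t$, and it is genuinely more delicate than the single nondegenerate-form bookkeeping of the $m=1$ case. A secondary point is the justification that the surjection moments uniquely pin down the limiting distribution on finite $\fp$-torsion $\cO$-modules and that the limits may be interchanged: for $m=0$ this is the classical Cohen--Lenstra/Friedman--Washington computation, but for $m=1,2$ the self-dual constraint requires the moment-determinacy result in the form adapted to modules carrying an alternating pairing.
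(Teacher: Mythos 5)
Your sketch cannot be checked against an argument in the paper, because the paper contains none: Theorem~\ref{thm:distr} is imported wholesale from \cite[Thms.~3.8 and~3.23]{Adam} and \cite[Rem.~4.27]{Diss}. Judged on its own, your proposal is a programme rather than a proof, and the quantitative core is missing exactly where the theorem lives. The moment strategy is sound in spirit for $m=0$, where the probability that $(g-\mathbf 1)V\subseteq\ker\psi$ depends only on the isomorphism type of $H=V/\ker\psi$. But for $m\ge1$ that probability depends on how $\ker\psi$ sits relative to $J_n$, i.e.\ on the $\SPk{2n}{m}{f}$-orbit of $\psi$, which is stratified by the alternating form induced modulo $\fp^m$ and its radical; so the sum over $\psi$ does not collapse to one term per $H$. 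Classifying these orbits, computing the pointwise-stabilizer indices in each, and inverting is precisely the content of parts (b) and (c): the factors $(q)_rq^{\binom r2}$, $(q)_{r-s}(q)_sq^{\binom r2+\binom s2}$ and $(q^2)_t$ are the \emph{output} of that computation, which your write-up defers (``should produce'', ``genuinely more delicate''). Two further steps are invoked as black boxes: perfectness of the pairing between $\ker(g-\mathbf 1)$ and $\operatorname{coker}(g-\mathbf 1)$ modulo $\fp^m$ (you prove only one-sided orthogonality), and moment determinacy for limiting measures on modules with alternating pairings, where for $m\ge1$ the surjection moments grow with $H$ and the classical determinacy statement does not apply without a growth estimate you never supply.

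There is also a flaw, not just an omission: the hypothesis that $G$ is annihilated by $\fp^{f-1}$ never enters your argument --- indeed $f$ plays no role in it --- yet the theorem is false without that hypothesis. Already for $f=1$, $m=0$, the $1$-eigenspace of a uniform element of $\GL_{2n}(\FF_q)$ has, in the limit $n\to\infty$, the corank distribution of a uniform matrix over $\FF_q$: a rank-$k$ elementary abelian module receives mass $(q)_\infty q^{-k^2}/(q)_k^2$ (this follows from the Rudvalis--Shinoda fixed-space distribution via Euler's identity, or from the Durfee-square identity), whereas formula (a), applied blindly, would predict $(q)_\infty/|\AutO(G)|=(q)_\infty q^{-k^2}/(q)_k$; the two differ by a factor $(q)_k$ for every $k\ge1$. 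The formulas of the theorem describe the $\cO_\fp$-limit and hold over $\Op{f}$ only below the top exponent, where the truncation of the finite ring piles up extra mass. Since your derivation is uniform in $G$ over all $\Op{f}$-modules, it would ``prove'' the false statement above; so a step, as described, must break, and a correct version of your argument has to separate modules with $\fp^{f-1}G=0$ from those of exponent $\fp^f$ when performing the moment inversion (or pass to the inverse limit over $f$ and control that comparison explicitly).
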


In particular one sees that the limit does not depend on $\cO$ and $\fp$,
but only on $q$, and is also independent of $f$, as long as $G$ is
annihilated by $\fp^{f-1}$. The value of $|\AutO(G)|$ is computed
explicitly in \cite[Thm.~2.11]{CM90}.

\begin{rem}   \label{general set}
We have no closed formulas for $P_{m,q}(G)$ for $m\ge3$ and
general $G$, but for fixed $m$ and $G$ it is possible to calculate
$P_{m,q}(G)$ explicitly, as indicated in \cite{Adam}.
\end{rem}

%%%%%%%%%%%%%%%%%%%%%%%%%%%%%%%%%%%%%%%%%%%%%%%%%%%%%%%%%%%%%%%%%%%%%%%%%
\section{$u$-Probabilities}

The concept of $u$-probability was originally introduced by Cohen and Lenstra 
(see \cite[Chap.~5]{CL83}). Here we present a modified version used by various
other authors (e.g.~\cite{Le10}). Throughout, $\cO$ denotes the ring of
integers of an algebraic number field, $\fp\trianglelefteq\cO$ is
a non-zero prime ideal, and $q:=|\cO/\fp|$. We let $\cP$ denote the set of
isomorphism classes of finite $\fp$-torsion $\cO$-modules.

\begin{defn}\label{uprob}
 Given a probability distribution $P$ on $\cP$ and a natural number $u$ we
 define the \emph{$u$-probability distribution} with respect to $P$ by the
 following recursion formula
 $$P^{(u)}:\cP\longrightarrow\RR,\quad G\mapsto P^{(u)}(G):=
   \sum_{H\in\cP}\sum_{y\in H \atop H/\langle y\rangle\cong G}
      \dfrac{P^{(u-1)}(H)}{|H|},$$
 where $P^{(0)}(G):= P(G)$ for $G\in\cP$. Here, $\langle y\rangle$ denotes
 the $\cO$-submodule generated by $y$. We call $P^{(u)}(G)$ the
 \emph{$u$-probability} of $G$ (with respect to $P$).
\end{defn}

\begin{rem}
Note that $P^{(u)}$ is in fact a probability distribution on $\cP$, since
$$\sum_{G\in\cP}P^{(1)}(G)=\sum_{H\in\cP}\sum_{G\in\cP}
  \dfrac{|\{y\in H \mid H/\langle y\rangle\cong G\}|}{|H|}\cdot P(H)
  =\sum_{H\in\cP} P(H)=1$$
for any probability distribution $P$ on $\cP$, and $P^{(u+1)}=(P^{(u)})^{(1)}$.
\end{rem}

We now compute the $u$-probabilities for the distributions $P_{i,q}$ given in
Theorem~\ref{thm:distr}. For $G\in\cP$ a finite $\fp$-torsion $\cO$-module
of rank~$r$ and $k\ge0$ we set
$$w_k(G):=\begin{cases}
   \dfrac{(q)_k}{(q)_{k-r}|\AutO(G)|}& \text{ if } k\ge r,\\
   \qquad 0& \text{ else,}
\end{cases}$$
and $w_\infty(G):=|\AutO(G)|^{-1}=\lim_{k\rightarrow\infty}w_k(G)$.

With this we recall the following crucial result from \cite[Thm.~3.5]{CL83}:

\begin{prop}   \label{prop:CLprop}
 Let $Z,G\in\cP$. Then for all $0\le k\le\infty$ we have
 $$\sum_{H\in\cP} w_k(H)\dfrac{|\{H_1\leq H : H_1\cong Z,\ H/H_1\cong G\}|}
  {|\AutO(H)|}=w_k(Z)w_k(G).$$
\end{prop}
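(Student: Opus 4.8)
The plan is to interpret the weight $w_k$ combinatorially and thereby reduce the displayed identity to a counting statement about flags in a single free module. Since every module that can occur in the sum (for fixed $Z,G$) is annihilated by a fixed power $\fp^N$, and since both sides depend only on $q$ and on the module structure, I first pass to the finite chain ring $R:=\Op{N}$ with $N$ chosen larger than the exponents of all relevant modules, and count finite $R$-modules. I treat $k<\infty$ first and recover $k=\infty$ afterwards by letting $k\to\infty$: for fixed $Z,G$ the sum is dominated by the convergent series $\sum_H |\{H_1\le H:H_1\cong Z,\ H/H_1\cong G\}|/|\AutO(H)|$, so the limit may be taken termwise using $w_\infty(G)=\lim_k w_k(G)$.

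The first and decisive step is the interpretation of the weight. Let $F:=R^k$ be free of rank $k$. A homomorphism $F\to G$ is determined by the images of the $k$ standard generators, and by Nakayama it is surjective exactly when the induced map $F/\fp F\to G/\fp G$ of $\FF_q$-vector spaces is surjective; counting spanning $k$-tuples in $\FF_q^{\,r}$ with $r=\rk_\fp(G)$ and lifting gives $|\mathrm{Surj}_R(F,G)|=|G|^k\,(q)_k/(q)_{k-r}$. Since $\AutO(G)$ acts freely on surjections by post-composition, dividing by $|\AutO(G)|$ and comparing with the definition of $w_k$ yields, for every $X\in\cP$,
$$w_k(X)=\frac{N_X}{|X|^k},\qquad N_X:=|\{K\le F : F/K\cong X\}|.$$
Thus $w_k(X)|X|^k$ is exactly the number of submodules of $F$ with quotient isomorphic to $X$, and I record that $\mathrm{Surj}_R(F,X)$, hence $N_X$, depends only on the rank $k=\rk_\fp(F)$ (and on $X$), a fact used again below.

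Substituting this into the left-hand side turns it into a flag count. Writing each $H$ as $F/K$ and each submodule $H_1\cong Z$ with $H/H_1\cong G$ as $K'/K$ for $K\le K'\le F$, the product $N_H\cdot|\{H_1\le H:H_1\cong Z,\ H/H_1\cong G\}|$ counts chains $K\le K'\le F$ with $K'/K\cong Z$, $F/K'\cong G$ and $F/K\cong H$; summing over $H$ removes the last constraint. Any such $H$ satisfies $|H|=|Z|\,|G|$, so the factor $|H|^k$ is constant and pulls out of the sum, and the identity reduces to
$$\#\{K\le K'\le F : K'/K\cong Z,\ F/K'\cong G\}=N_Z\cdot N_G.$$

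The crux is this last equality, and the step I expect to be the real obstacle. Grouping the flags by their middle term $K'$, the left-hand side equals $\sum_{K'}|\{K\le K' : K'/K\cong Z\}|$, the sum running over the $N_G$ submodules $K'$ with $F/K'\cong G$. The essential point is that each inner count equals $N_Z$ \emph{independently of $K'$}: this is exactly where $N$ large is needed, for then the kernel of any surjection $F\twoheadrightarrow G$ has full rank $k$ and exponent at least that of $Z$, so by the same Nakayama argument $|\{K\le K' : K'/K\cong Z\}|=|\mathrm{Surj}_R(K',Z)|/|\AutO(Z)|$ depends only on $\rk K'=k$ and hence equals $N_Z$. (Alternatively one may argue through the self-duality of the quasi-Frobenius ring $R$, under which the perp involution on submodules of $F$ matches flags against pairs of submodules.) Granting this, the flag count is $N_G\cdot N_Z$; unwinding the substitution gives $w_k(Z)w_k(G)$, and the case $k=\infty$ follows by passing to the limit.
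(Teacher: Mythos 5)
The paper offers no proof of this proposition at all --- it is quoted from \cite[Thm.~3.5]{CL83} --- so the comparison is with the standard argument. Your route is exactly that standard argument: interpret $|X|^k w_k(X)$ as the number $N_X$ of submodules of the free module $F=R^k$ (over a chain ring $R=\Op{N}$ with $N$ large) having quotient isomorphic to $X$, via surjection counting and the free action of $\AutO(X)$, and then reduce the identity to the flag count $\#\{K\le K'\le F : K'/K\cong Z,\ F/K'\cong G\}=N_Z N_G$. The core of this is sound. But two points need attention.

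First, the serious one: what your argument proves is
$$\sum_{H\in\cP} w_k(H)\,|\{H_1\leq H : H_1\cong Z,\ H/H_1\cong G\}| \;=\; w_k(Z)\,w_k(G),$$
with \emph{no} factor $1/|\AutO(H)|$. When you substitute $w_k(H)=N_H/|H|^k$ into the printed left-hand side, you silently discard that factor, so as written your reduction to the flag count does not follow from the stated identity. In fact the statement as printed is false: taking $Z=0$, the printed left side is $w_k(G)/|\AutO(G)|$ while the right side is $w_k(G)$. The extra $1/|\AutO(H)|$ is a typo in the paper ($w_k(H)$ already contains $1/|\AutO(H)|$), and the version you proved is the intended one: it is precisely what the paper invokes later --- in the proof of Theorem~\ref{thm:level0} at $k=\infty$, where $w_\infty(H)=1/|\AutO(H)|$ supplies the single automorphism factor displayed there, and in the proof of Lemma~\ref{lem:m=1} at $k=r$, in exactly your form. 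So you ended up proving the right statement, but a complete solution has to notice and say this; passing without comment from the stated left-hand side to $\sum_H N_H\,|\{H_1\le H:\dots\}|/(|Z||G|)^k$ is, on its face, a non sequitur.

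Second, a repairable imprecision in your decisive step: ``full rank $k$ and exponent at least that of $Z$'' does not suffice to conclude that the number of surjections $K'\to Z$ equals that from $F$; for instance $\Op{N}\oplus\Op{1}$ has rank $2$ and large exponent, yet admits fewer homomorphisms to $Z$ than $R^2$ does once $Z$ has exponent $\ge\fp^2$. What you need is that \emph{every} cyclic summand of $K'$ has exponent at least that of $Z$, and this is what is true: by the elementary divisor theorem over $R$ there is a basis $f_1,\dots,f_k$ of $F$ with $K'=\bigoplus_i \fp^{c_i}f_iR$, and $F/K'\cong G$ forces all $c_i\le e$ where $\fp^{e}$ is the exponent of $G$; hence $K'\cong\bigoplus_i\cO/\fp^{N-c_i}$ with all $N-c_i\ge N-e\ge e_Z$ once $N\ge e+e_Z$. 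Then $K'/\fp^{e_Z}K'$ is free of rank $k$ over $\Op{e_Z}$, every homomorphism $K'\to Z$ factors through it, and the surjection count depends only on $k$, as you claimed. (Also note the $k=\infty$ case needs no domination argument: only the finitely many $H$ with $|H|=|Z||G|$ contribute, so the sum is finite and $w_\infty=\lim_k w_k$ can be applied termwise trivially.) With these two repairs your argument is a complete and correct proof of the (corrected) proposition.
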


\begin{thm}   \label{thm:level0}
 Let $G$ be a finite $\fp$-torsion $\cO$-module, and $q:=|\cO/\fp|$. Then
 $$P_{0,q}^{(u)}(G)
   =\dfrac{(q)_\infty}{(q)_u}\cdot\dfrac{1}{|G|^u|\AutO(G)|}$$
 for all integers $u\geq 0$.
\end{thm}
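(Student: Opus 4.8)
The plan is to argue by induction on $u$, the base case $u=0$ being exactly Theorem~\ref{thm:distr}(a): since $(q)_0=1$ and $|G|^0=1$, the asserted formula reads $P_{0,q}^{(0)}(G)=(q)_\infty/|\AutO(G)|$, which holds for every finite $\fp$-torsion $\cO$-module $G$ (choosing $f$ large enough that $\fp^{f-1}$ annihilates $G$ and using that the limit is independent of $f$). So I would assume the formula for $u-1$ and insert it into the defining recursion of Definition~\ref{uprob}. Writing $N(H,G):=|\{y\in H: H/\langle y\rangle\cong G\}|$, this gives
$$P_{0,q}^{(u)}(G)=\sum_{H\in\cP}\frac{P_{0,q}^{(u-1)}(H)}{|H|}\,N(H,G)=\frac{(q)_\infty}{(q)_{u-1}}\sum_{H\in\cP}\frac{N(H,G)}{|H|^{u}\,|\AutO(H)|}.$$

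The heart of the argument is to evaluate the inner sum. First I would rewrite $N(H,G)$ by grouping the elements $y$ according to the cyclic submodule $\langle y\rangle$ they generate: a cyclic $\cO$-module $Z$ has exactly $|\AutO(Z)|$ generators, and only cyclic submodules with quotient $\cong G$ contribute, so
$$N(H,G)=\sum_{Z\ \mathrm{cyclic}}|\AutO(Z)|\cdot\big|\{Z'\le H: Z'\cong Z,\ H/Z'\cong G\}\big|,$$
the sum running over isomorphism classes of cyclic $\fp$-torsion $\cO$-modules, i.e.\ over $Z\cong\cO/\fp^{a}$ for $a\ge0$. Substituting this, interchanging the two (nonnegative) summations, and using $|H|=|Z|\,|G|$ whenever a term is nonzero, the inner sum becomes
$$\frac{1}{|G|^{u}}\sum_{Z\ \mathrm{cyclic}}\frac{|\AutO(Z)|}{|Z|^{u}}\sum_{H\in\cP}\frac{\big|\{Z'\le H: Z'\cong Z,\ H/Z'\cong G\}\big|}{|\AutO(H)|}.$$

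Now I would invoke Proposition~\ref{prop:CLprop} in the limit $k\to\infty$, where $w_\infty(H)=|\AutO(H)|^{-1}$: the innermost sum over $H$ is an instance of that identity and evaluates to $w_\infty(Z)\,w_\infty(G)=(|\AutO(Z)|\,|\AutO(G)|)^{-1}$. The factor $|\AutO(Z)|$ then cancels, leaving
$$\frac{1}{|G|^{u}\,|\AutO(G)|}\sum_{Z\ \mathrm{cyclic}}\frac{1}{|Z|^{u}}=\frac{1}{|G|^{u}\,|\AutO(G)|}\sum_{a=0}^{\infty}q^{-au}=\frac{1}{|G|^{u}\,|\AutO(G)|}\cdot\frac{1}{1-q^{-u}},$$
the geometric series converging since $u\ge1$. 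Combining this with the prefactor $(q)_\infty/(q)_{u-1}$ and using the telescoping identity $(q)_{u-1}\,(1-q^{-u})=(q)_u$ yields exactly $P_{0,q}^{(u)}(G)=(q)_\infty/\big((q)_u\,|G|^u\,|\AutO(G)|\big)$, completing the induction. The step needing most care is the reorganization of $N(H,G)$ into a sum over cyclic submodules together with the cancellation of $|\AutO(Z)|$ after applying Proposition~\ref{prop:CLprop}; the interchange of summations is harmless because all terms are nonnegative, and it is precisely the separate treatment of the base case $u=0$ that lets the geometric series (valid only for $u\ge1$) appear.
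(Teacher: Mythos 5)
Your proof is correct and takes essentially the same route as the paper's: both argue by induction on $u$, convert the count of elements $y$ with $H/\langle y\rangle\cong G$ into counts of cyclic submodules $Z'\cong\cO/\fp^n$ (using that a cyclic module has exactly $|\AutO(Z)|$ generators), apply Proposition~\ref{prop:CLprop} at $k=\infty$, and finish by summing the geometric series $\sum_{n\ge0}q^{-un}$ and telescoping $(q)_{u-1}(1-q^{-u})=(q)_u$. The only difference is presentational: the paper fixes $Z=\cO/\fp^n$ first and sums the resulting identities over $n$, whereas you decompose $N(H,G)$ inside the recursion and interchange the summations.
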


\begin{proof}
The induction base $u=0$ is given by Theorem~\ref{thm:distr}, so now let
$u\ge1$. By Proposition~\ref{prop:CLprop} with $k=\infty$ we have 
$$\sum_{H\in\cP} \dfrac{|\{H_1\leq H : H_1\cong Z,\ H/H_1\cong G\}|}
  {|\AutO(H)|}=\dfrac{1}{|\AutO(Z)||\AutO(G)|}$$
for any $Z\in\cP$. With $Z=\cO/\fp^n\cO$ this gives
$$\sum_{H\in\cP \atop |H|=q^n|G|} \dfrac{|\{y\in H : |\langle y\rangle|=q^n,
  \  H/\langle y\rangle\cong G\}|}{|\AutO(H)|}=\dfrac{1}{|\AutO(G)|}.$$
Multiplying this equation by $(q^n|G|)^{-u}$ and summing over all $n\in\NN$
we obtain
$$\sum_{n\geq 0}\sum_{H\in\cP \atop|H|=q^n|G|}\dfrac{|\{y\in H :
    |\langle y\rangle| = q^n,\ H/\langle y\rangle\cong G\}|}{|H|^u|\AutO(H)|}
  = \sum_{n\geq 0}\dfrac{1}{q^{un}|G|^u|\AutO(G)|}$$
which by induction is equivalent to
$$\dfrac{(q)_{u-1}}{(q)_\infty}\sum_{H\in\cP}
  \sum_{y\in H \atop H/\langle y\rangle\cong G}\dfrac{P_{0,q}^{(u-1)}(H)}{|H|}
   =\dfrac{1}{|G|^u|\AutO(G)|}\sum_{n\geq 0}\dfrac{1}{q^{un}}
   = \dfrac{1}{|G|^u|\AutO(G)|}.$$
\end{proof}

Next, we determine the $u$-probabilities for the distribution given by the
first symplectic groups. For this we show first the following result.

\begin{lem}   \label{lem:m=1}
 For $G\in\cP$ of $\fp$-rank $r$ and all $u\in\NN$ we have
 $$\sum_{H\in\cP\atop\rk_\fp(H)=r}\dfrac{|\{y\in H : H/\langle y\rangle
   \cong G\}|}{|H|^u |\AutO(H)|}
   =\dfrac{q^{r+u}-1}{q^r(q^u-1)}\cdot\dfrac{1}{|G|^u|\AutO(G)|}.$$
\end{lem}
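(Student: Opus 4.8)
The plan is to partition the left-hand side according to the order of the cyclic submodule $\langle y\rangle$. Writing $\langle y\rangle\cong\Op{n}$ (every cyclic $\fp$-torsion $\cO$-module has this shape), the condition $H/\langle y\rangle\cong G$ forces $|H|=q^n|G|$, and tensoring the exact sequence $0\to\langle y\rangle\to H\to G\to0$ with $\cO/\fp$ gives a right-exact sequence showing $\rk_\fp(H)\in\{r,r+1\}$, the value $r+1$ occurring exactly when $y\notin\fp H$. Since the stated sum keeps only the terms with $\rk_\fp(H)=r$, the whole difficulty is to separate, for each fixed $n$, the rank-$r$ modules $H$ from the rank-$(r+1)$ ones that both contribute to the unrestricted Cohen--Lenstra count.

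To effect this separation I would apply Proposition~\ref{prop:CLprop} with $Z=\Op{n}$, for which $\rk_\fp(Z)=1$ and hence $w_k(Z)=(1-q^{-k})/|\AutO(Z)|$. The crucial feature is that for finite $k$ the weight $w_k(H)=\frac{(q)_k}{(q)_{k-\rk_\fp(H)}|\AutO(H)|}$ sees the rank of $H$. Splitting the sum in Proposition~\ref{prop:CLprop} into its rank-$r$ and rank-$(r+1)$ parts, dividing through by $(q)_k/(q)_{k-r}$, and using $\frac{(q)_{k-r}}{(q)_{k-r-1}}=1-q^{r-k}$, I expect a relation of the shape
$$A_n+(1-q^{r-k})\,B_n=\frac{1-q^{-k}}{|\AutO(Z)|\,|\AutO(G)|},$$
valid for all $k\ge r+1$, where $A_n$ and $B_n$ denote the rank-$r$ and rank-$(r+1)$ contributions, counted by cyclic submodules $H_1\cong\Op{n}$ with $H/H_1\cong G$.

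Letting $k\to\infty$ yields the companion equation $A_n+B_n=(|\AutO(Z)|\,|\AutO(G)|)^{-1}$, so the $2\times2$ system solves to
$$A_n=\frac{q^r-1}{q^r}\cdot\frac{1}{|\AutO(Z)|\,|\AutO(G)|}.$$
Converting back from submodules to elements multiplies by $|\AutO(Z)|=|\AutO(\Op{n})|$ (the number of generators of a cyclic submodule $\cong\Op{n}$), so the element-count contribution of the rank-$r$ modules equals $\frac{q^r-1}{q^r}\cdot\frac{1}{|\AutO(G)|}$, independent of $n\ge1$; the case $n=0$ contributes only $H\cong G$ with weight $1/|\AutO(G)|$.

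Finally I would reassemble the left-hand side as $\frac{1}{|G|^u}\sum_{n\ge0}q^{-un}\cdot(\text{rank-}r\text{ element count})$, namely
$$\frac{1}{|G|^u|\AutO(G)|}\Bigl(1+\frac{q^r-1}{q^r}\sum_{n\ge1}q^{-un}\Bigr),$$
evaluate $\sum_{n\ge1}q^{-un}=(q^u-1)^{-1}$, and combine over the common denominator $q^r(q^u-1)$, where $q^r(q^u-1)+q^r-1=q^{r+u}-1$ produces the asserted factor $\frac{q^{r+u}-1}{q^r(q^u-1)}$. The one genuinely delicate step is the rank separation in the second paragraph: unlike Theorem~\ref{thm:level0}, where the unrestricted sum permits the direct use of Proposition~\ref{prop:CLprop} at $k=\infty$, the constraint $\rk_\fp(H)=r$ here forces the rank-sensitive weights at finite $k$, and one must verify that only the two ranks $r$ and $r+1$ occur so that the linear system is genuinely $2\times2$.
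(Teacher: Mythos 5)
Your proof is correct, and it shares the paper's skeleton: split the sum according to $|\langle y\rangle|=q^n$, convert counts of elements $y$ into counts of cyclic submodules $H_1\cong\Op{n}$ at the cost of the factor $|\AutO(\Op{n})|$, apply Proposition~\ref{prop:CLprop} with $Z=\Op{n}$, and finish with the geometric series $\sum_{n\ge1}q^{-un}=(q^u-1)^{-1}$. Where you genuinely diverge is in the step you yourself flag as delicate: isolating the modules of $\fp$-rank exactly $r$. You do this with \emph{two} evaluations of the Proposition --- one at a finite $k\ge r+1$ and one at $k=\infty$ --- together with the observation (correctly justified by right-exactness of tensoring with $\cO/\fp$) that only ranks $r$ and $r+1$ can occur, and you then solve a nonsingular $2\times2$ system (determinant $q^{r-k}\ne0$) to get the rank-$r$ contribution $A_n=\frac{q^r-1}{q^r}\cdot\frac{1}{|\AutO(\Op{n})|\,|\AutO(G)|}$. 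The paper instead makes the \emph{single} evaluation $k=r$: there the weight itself performs the truncation, since $w_r(H)=0$ as soon as $\rk_\fp(H)>r$, while $1/|\AutO(H)|=w_r(H)/(q)_r$ when $\rk_\fp(H)=r$; thus the rank-restricted sum becomes an unrestricted one in a single stroke and one application of the Proposition suffices, with no system to solve. Your route costs a little more bookkeeping, but it hands you the rank-$(r+1)$ contribution $B_n=\frac{q^{-r}}{|\AutO(\Op{n})|\,|\AutO(G)|}$ as a free byproduct --- essentially the quantity $X(r+1)$ that the paper recovers only in the proof of Theorem~\ref{thm:level1}, by subtracting $X(r)$ from the total supplied by Theorem~\ref{thm:level0}.

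One remark that applies to your argument and to the paper's own proof alike: both use Proposition~\ref{prop:CLprop} in the form $\sum_{H} w_k(H)\,|\{H_1\le H : H_1\cong Z,\ H/H_1\cong G\}|=w_k(Z)w_k(G)$, that is, \emph{without} the extra factor $1/|\AutO(H)|$ that appears in the Proposition as printed. The printed version cannot be right (take $Z$ trivial and $G$ of rank one: the two sides then differ by the factor $|\AutO(G)|$), so your implicit reading --- which is also the one needed to make your weights match the single power of $|\AutO(H)|$ in the Lemma --- is the correct one.
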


\begin{proof}
Let $u\in\NN$. Splitting up the sum in question according to the order of
$y$ we get
$$\sum_{H\in\cP\atop\rk_\fp(H)=r}\dfrac{|\{y\in H\mid H/\langle y\rangle\cong
  G\}|}{|H|^u |\AutO(H)|}
   =\sum_{n\ge0}\sum_{H\in\cP\atop\rk_\fp(H)=r}\dfrac{|\{y\in H :
  |\langle y\rangle|=q^n,\,H/\langle y\rangle\cong G\}|}{|H|^u |\AutO(H)|}.
$$
Writing $Z_n:=\cO/\fp^n\cO$, the inner sum equals
$$\begin{aligned}
  \sum_{H\in\cP\atop\rk_\fp(H)=r}|\AutO(Z_n)|&\cdot
    \frac{|\{H_1\le H : H_1\cong Z_n,\,H/\langle H_1\rangle\cong G\}|}
    {|H|^u |\AutO(H)|} \\
  & =\frac{|\AutO(Z_n)|}{(q)_r|G|^uq^{nu}}\sum_{H\in\cP} w_r(H)\,
    |\{H_1\le H : H_1\cong Z_n,\,H/\langle H_1\rangle\cong  G\}|\\
  & = \frac{|\AutO(Z_n)|}{(q)_r|G|^uq^{nu}}\,w_r(Z_n)w_r(G) \qquad
      \text{by Proposition~\ref{prop:CLprop}}.
\end{aligned}$$
Note that the middle sum may be extended over all $H$, since $w_r(H)=0$ if
$\rk_\fp(H)>r$.
Now $w_r(Z_0)=1$ and $w_r(Z_n)=(q)_r/(q)_{r-1}|\AutO(Z_n)|^{-1}$ when
$n>0$, so the left hand side in the assertion becomes
$$\begin{aligned}
  \frac{w_r(G)}{(q)_r|G|^u}\left(1+\sum_{n\ge1}\frac{w_r(Z_n)|\AutO(Z_n)|}
      {q^{nu}}\right) 
  & = \frac{w_r(G)}{(q)_r|G|^u}\left(1+\frac{(q)_r}{(q)_{r-1}}
      \sum_{n\ge1} \frac{1}{q^{nu}}\right)\\
  & = \frac{1}{|G|^u|\AutO(G)|}\Big(1+(1-q^{-r})\frac{1}{q^u-1}\Big)\\
  & = \frac{q^{r+u}-1}{q^r(q^u-1)}\cdot\frac{1}{|G|^u|\AutO(G)|}\\
\end{aligned}$$
as claimed.
\end{proof}

\begin{thm}   \label{thm:level1}
 Let $G$ be a finite $\fp$-torsion $\cO$-module of rank~$r$, and
 $q:=|\cO/\fp|$. Then
 $$P_{1,q}^{(u)}(G)=\dfrac{(q^2)_u(q)_\infty}{(q)_u(q^2)_\infty}
   \cdot\dfrac{(q)_{r+u} q^{\binom{r}{2}}}{(q)_u|G|^u |\AutO(G)|}.$$
\end{thm}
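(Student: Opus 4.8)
The plan is to argue by induction on $u$, in the same spirit as the proof of Theorem~\ref{thm:level0}, using the recursion of Definition~\ref{uprob} together with Lemma~\ref{lem:m=1}. For the base case $u=0$ one checks that the claimed formula specializes to $P_{1,q}(G)=\frac{(q)_\infty}{(q^2)_\infty}\cdot\frac{(q)_r q^{\binom{r}{2}}}{|\AutO(G)|}$, which is exactly Theorem~\ref{thm:distr}(b). For the inductive step, assume the formula for $u-1$ and insert it into the recursion $P_{1,q}^{(u)}(G)=\sum_{H\in\cP}\sum_{y\in H,\,H/\langle y\rangle\cong G}P_{1,q}^{(u-1)}(H)/|H|$. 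The crucial structural observation is that whenever $H/\langle y\rangle\cong G$ with $\langle y\rangle$ cyclic, the sequence $0\to\langle y\rangle\to H\to G\to 0$ forces $r\le\rk_\fp(H)\le r+1$; hence only modules $H$ of $\fp$-rank $r$ or $r+1$ contribute. Since the inductive formula for $P_{1,q}^{(u-1)}(H)$ depends on $\rho:=\rk_\fp(H)$ only through the factor $(q)_{\rho+u-1}q^{\binom{\rho}{2}}$, and since dividing by $|H|$ produces the power $|H|^{u}$ in the denominator, the whole sum splits as a constant multiple of $(q)_{r+u-1}q^{\binom{r}{2}}S_r+(q)_{r+u}q^{\binom{r+1}{2}}S_{r+1}$, where $S_\rho:=\sum_{\rk_\fp(H)=\rho}\frac{|\{y:H/\langle y\rangle\cong G\}|}{|H|^u|\AutO(H)|}$ and the constant is $\frac{(q^2)_{u-1}(q)_\infty}{(q)_{u-1}^2(q^2)_\infty}$.

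The two pieces $S_r$ and $S_{r+1}$ are then evaluated separately. The rank-$r$ term is given directly by Lemma~\ref{lem:m=1}, namely $S_r=\frac{q^{r+u}-1}{q^r(q^u-1)}\cdot\frac{1}{|G|^u|\AutO(G)|}$. To obtain $S_{r+1}$ I would first compute the total sum $T:=S_r+S_{r+1}$ over all $H$ (the sum being automatically supported on ranks $r$ and $r+1$). Writing each generator count as $|\{y:H/\langle y\rangle\cong G\}|=\sum_{n\ge0}|\AutO(Z_n)|\cdot|\{H_1\le H:H_1\cong Z_n,\,H/H_1\cong G\}|$ with $Z_n:=\cO/\fp^n$, and applying Proposition~\ref{prop:CLprop} with $k=\infty$ exactly as in the proof of Theorem~\ref{thm:level0}, the $H$-sum collapses and leaves the geometric series $\sum_{n\ge0}q^{-nu}$, giving $T=\frac{q^u}{q^u-1}\cdot\frac{1}{|G|^u|\AutO(G)|}$. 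Subtracting then yields the pleasantly simple value $S_{r+1}=T-S_r=\frac{1}{q^r(q^u-1)}\cdot\frac{1}{|G|^u|\AutO(G)|}$.

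Finally I would reassemble the two contributions. Using $\binom{r+1}{2}=\binom{r}{2}+r$ and $(q)_{r+u}=(q)_{r+u-1}(1-q^{-(r+u)})$, the bracket $(q)_{r+u-1}q^{\binom{r}{2}}(q^{r+u}-1)+(q)_{r+u}q^{\binom{r+1}{2}}$ factors, after pulling out $(q)_{r+u-1}q^{\binom{r}{2}}$, into $(q^u+1)(q^r-q^{-u})$ times that factor; dividing by the $q^r$ from $S_r,S_{r+1}$ turns $(q)_{r+u-1}(q^r-q^{-u})/q^r$ back into $(q)_{r+u}$, leaving $\frac{(q)_{r+u}q^{\binom{r}{2}}(q^u+1)}{(q^u-1)|G|^u|\AutO(G)|}$. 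Multiplying by the inductive constant and invoking the identity $\frac{(q^2)_{u-1}}{(q)_{u-1}^2}\cdot\frac{q^u+1}{q^u-1}=\frac{(q^2)_u}{(q)_u^2}$ (which follows from $(q^2)_u=(q^2)_{u-1}(1-q^{-u})(1+q^{-u})$ and $(q)_u=(q)_{u-1}(1-q^{-u})$) produces exactly the claimed expression. The main obstacle is precisely the rank-$(r+1)$ contribution: Lemma~\ref{lem:m=1} controls only the equal-rank case, so $S_{r+1}$ must be found indirectly, and computing the full sum $T$ and subtracting is what makes this tractable; it is a small pleasant surprise that $S_{r+1}$ comes out so clean. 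The subsequent $q$-Pochhammer bookkeeping is routine but must be carried out carefully to land on the stated normalization.
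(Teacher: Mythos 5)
Your proof is correct and follows essentially the same route as the paper's: induction on $u$, splitting the recursion over the two possible ranks $r$ and $r+1$ of $H$, evaluating the rank-$r$ piece via Lemma~\ref{lem:m=1}, obtaining the rank-$(r+1)$ piece by subtracting from the total sum, and then the same $q$-Pochhammer manipulations. The only cosmetic difference is that you re-derive the total $T$ from Proposition~\ref{prop:CLprop} and the geometric series, whereas the paper simply cites Theorem~\ref{thm:level0} for the identity $Y=X(r)+X(r+1)$ --- but that is the same computation unfolded.
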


\begin{proof}
Let $G\in\cP$ of $\fp$-rank $r$. The case $u=0$ holds by
Theorem~\ref{thm:distr}(b). So by induction and Definition~\ref{uprob} we need
to compute
$$\dfrac{(q^2)_{u-1}(q)_\infty}{(q)_{u-1}(q^2)_\infty}
   \Big( (q)_{r+u-1}\,q^{\binom{r}{2}}\,X(r)
       + (q)_{r+u}\,q^{\binom{r+1}{2}}X(r+1)\Big)$$
where
$$X(s):=
  \sum_{H\in\cP\atop\rk_\fp(H)=s}\sum_{y\in H\atop H/\langle y\rangle\cong G}
    \dfrac{1}{(q)_{u-1}|H|^u|\AutO(H)|}\qquad\text{ for }s\in\{r,r+1\}.$$
With
$$Y:= \dfrac{1}{(q)_u|G|^u|\AutO(G)|}$$
Lemma~\ref{lem:m=1} states that
$X(r)=\dfrac{q^{r+u}-1}{q^{r+u}}\cdot Y$, and Theorem~\ref{thm:level0} gives
$Y=X(r)+X(r+1)$, so
$$X(r+1)=Y-X(r)=\Big(1-\frac{q^{r+u}-1}{q^{r+u}}\Big)Y=\frac{1}{q^{r+u}}\,Y.$$
Then the left hand side of the assertion becomes
$$\begin{aligned}
  \dfrac{(q^2)_{u-1}(q)_\infty\,q^{\binom{r}{2}}}{(q)_{u-1}(q^2)_\infty}
   &\Big( (q)_{r+u-1}\,\dfrac{q^{r+u}-1}{q^{r+u}}
       + (q)_{r+u}\,q^r \frac{1}{q^{r+u}}\Big)\, Y\\
  & = \dfrac{(q^2)_{u-1}(q)_\infty (q)_{r+u}\,q^{\binom{r}{2}}}
   {(q)_{u-1}(q^2)_\infty q^u}(q^u+ 1)\, Y
 = \dfrac{(q^2)_u(q)_\infty}{(q)_u(q^2)_\infty}\cdot
   \dfrac{(q)_{r+u}\,q^{\binom{r}{2}}}{(q)_u|G|^u|\AutO(G)|}
\end{aligned}$$
as claimed.
\end{proof}

%%%%%%%%%%%%%%%%%%%%%%%%%%%%%%%%%%%%%%%%%%%%%%%%%%%%%%%%%%%%%%%%%%%%%%%%%
\section{Distribution of class groups of number fields}
We can now present our conjecture about the distribution of $p$-parts of
class groups using the results from the last section. Here we restrict
ourselves to situations $\Sigma$ such that $\cO(\Sigma)=\ZZ$.

\begin{conj}   \label{conj:Main}
 Let $p$ be a prime, $\Sigma=(H,K_0,\sigma)$ be a situation with
 $\gcd(p,|H|)=1$ such that  $\cO(\Sigma)=\ZZ$, and $K_0$ be a number field
 containing the $p^{m}$th but not the $p^{m+1}$th roots of unity. Then a given
 finite abelian $p$-group $G$ occurs as the $p$-part of a relative class
 group $\Cl(K/K_0)$ for $K\in\cK(\Sigma)$ with probability $P_{m,p}^{(u)}(G)$,
 where $u=u(\Sigma)$.
\end{conj}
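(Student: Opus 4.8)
The statement is a heuristic conjecture rather than a theorem, so what one can realistically supply is not a derivation from first principles but a coherent justification together with the consistency checks that the single prediction $P_{m,p}^{(u)}(G)$ must pass. The plan is therefore to (i) explain why $\Sp_{2n}^{(m)}$ is the correct matrix model once the number of $p$-power roots of unity in $K_0$ is fixed, (ii) explain why the $u$-probability operation of Definition~\ref{uprob} encodes the unit rank $u(\Sigma)$, and (iii) verify that for $m=0$ the formula collapses to the Cohen--Martinet prediction and that for $m\ge1$ it reproduces the conjecture of \cite{Ma10} together with the numerical data.

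First I would set up the heuristic model. The $p$-part of a relative class group carries the Tate pairing, which is alternating, so the natural random model is a uniformly random element $h$ of a symplectic-type group acting on $(\Op{f})^{2n}$, with the $p$-part identified with the $1$-eigenspace $\ker(h-\mathbf{1}_{2n})$ in the limit $n\to\infty$. The decisive point is that this pairing takes values in roots of unity: if $K_0$ contains exactly the $p^m$th roots of unity, the form is defined only modulo $p^m$, so the relevant $h$ preserve $J_n$ only modulo $\fp^m$. This is precisely the defining condition of $\Sp_{2n}^{(m)}(\Op{f})$ in Definition~\ref{def:isymp}, and it is the step at which the genus $g\to\infty$ philosophy of \cite[\S3]{Ma10}, calibrated against the function-field computations of \cite{EVW,Ga12}, selects the distribution $P_{m,p}$. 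Since $\cO(\Sigma)=\ZZ$ one has $q=|\cO/\fp|=p$, which matches the $q$ appearing in $P_{m,q}$.

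Next I would account for the unit rank. In the Cohen--Martinet framework the $u=u(\Sigma)$ independent units impose $u$ extra quotient relations on the module, and this is exactly the recursion of Definition~\ref{uprob}: passing from $P$ to $P^{(u)}$ models dividing out $u$ generic cyclic submodules. Applying this to the eigenspace distribution $P_{m,p}$ yields the predicted probability $P_{m,p}^{(u)}$. The crucial consistency check is the case $m=0$: here $\Sp_{2n}^{(0)}=\GL_{2n}$, and Theorem~\ref{thm:level0} gives $P_{0,p}^{(u)}(G)=\frac{(p)_\infty}{(p)_u}\cdot\frac{1}{|G|^u|\AutO(G)|}$, which, since $\AutO(G)=\Aut(G)$ for $\cO=\ZZ$, is verbatim the Cohen--Martinet prediction of Conjecture~\ref{conj:CM} with $q=p$. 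Thus for base fields without $p$th roots of unity the new conjecture reduces to an established one, as required. For $m=1$ I would insert the explicit formula of Theorem~\ref{thm:level1} and check agreement with \cite[Conj.~2.1]{Ma10} and the computational tables.

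The main obstacle is that none of this is a proof in the strict sense: there is no rigorous theorem transporting the matrix-group distributions from characteristic $l$ to characteristic $0$. Achter's correspondence \cite{Ach06,Ach08} links function-field Jacobians to similitude-group distributions, and \cite{EVW,Ga12} evaluate those distributions, but the passage to number fields rests on the conjectural genus $g\to\infty$ limit. Hence the realistic target is internal coherence --- that the single object $P_{m,p}^{(u)}$ specializes correctly to every prediction it is meant to unify --- rather than a derivation of $\cN(\Sigma,G)=P_{m,p}^{(u)}(G)$. The hardest honest step is the identification of $m$ with the number of $p$-power roots of unity: this hinges entirely on the function-field analogy and, at present, admits no direct number-field justification.
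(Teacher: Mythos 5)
Your proposal is correct and takes essentially the same route as the paper: since the statement is a conjecture, the paper likewise offers no proof but only the heuristic motivation (Tate pairing forcing symplectic-type groups, the function-field analogy of Achter/EVW/Garton with the genus $g\to\infty$ philosophy, and the $u$-probability recursion encoding the unit rank) followed by the same consistency checks. Your specializations --- $m=0$ recovering Conjecture~\ref{conj:CM} via Theorem~\ref{thm:level0}, and $m=1$ recovering \cite[Conj.~2.1]{Ma10} via Theorem~\ref{thm:level1} --- are exactly the examples the paper gives after stating the conjecture.
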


Let us consider some special cases of this conjecture in which we have derived
explicit formulas.

\begin{exmp}
In the case $m=0$, which should correspond to situations were no non-trivial
$p$th roots of unity are contained in the base field,
Theorem~\ref{thm:level0} yields that the probability in
Conjecture~\ref{conj:Main} that a finite abelian $p$-group $G$ occurs as
the $p$-part of a class group is given by
$$P_{0,p}^{(u)}(G)=\dfrac{(p)_\infty}{(p)_u}\cdot\dfrac{1}{|G|^u|\Aut(G)|}.$$
This is exactly the probability predicted by Cohen, Lenstra and Martinet (see
Conjecture~\ref{conj:CM}).
\end{exmp}

\begin{exmp}
Next consider the case $m=1$, which should apply when $p$th but no higher
roots of unity are present. Then by Theorem~\ref{thm:level1} the
distribution in Conjecture~\ref{conj:Main} is given by
$$P_{1,p}^{(u)}(G)=\dfrac{(p^2)_u(p)_\infty}{(p)_u(p^2)_\infty}
  \cdot\dfrac{(p)_{r+u} p^{\binom{r}{2}}}{(p)_u|G|^u |\Aut(G)|}.$$
This distribution is exactly the one proposed by the second author in
\cite[Conj.~2.1]{Ma10} for the case when the base field contains $p$th but
no higher roots of unity. This was derived from, and is in very close
accordance with, huge amounts of computational data in a number of situations,
see \cite{Ma10} for details, but had no heuristic underpinning. Our approach
via eigenspaces in $m$th symplectic groups gives a theoretical explanation for
the above formula. As shown in \cite[Prop.~2.2]{Ma10}, the probability for a
class group to have $p$-rank $r$ would then equal
$$\frac{(p^2)_u(p)_\infty}{(p)_u(p^2)_\infty}\cdot
   \frac{1}{p^{r(r+2u+1)/2}\,(p)_r}.$$
\end{exmp}

\begin{exmp}
Finally, assume that $m=2$, which should apply if the $p^2$rd but no higher
roots of unity are present in the base field. For $u=0$, the value
of $P_{2,p}^{(0)}(G)$ is given in Theorem~\ref{thm:distr}(c). The general
formulae for $u\ge1$ seem to get quite messy, therefore we only give some
example values. When $u=1$ we find
$$P_{2,p}^{(1)}(\ZZ/p^k\ZZ)=\dfrac{(p^2)_u(p)_\infty}{(p)_u(p^2)_\infty}
  \cdot\dfrac{(p)_{r+u}\, p^{\binom{r}{2}}}{(p)_u|\ZZ/p^k\ZZ|^u\, |\Aut(\ZZ/p^k\ZZ)|}$$
(for this calculation we have to sum over groups of types
$\ZZ/p^\alpha\ZZ\times\ZZ/p^\beta\ZZ$ with $\alpha\ge k\ge\beta$), while for
the smallest non-cyclic $p$-group the result is
$$P_{2,p}^{(1)}(\ZZ/p\ZZ\times \ZZ/p\ZZ)
  =\dfrac{(p)_\infty}{(p^2)_\infty}\cdot\dfrac{p^3+p^2-1}{p^7(p-1)}.$$
From the previous result we obtain
$$P_{2,p}^{(2)}(1)=\dfrac{(p^4)_1(p)_\infty}{(p)_1(p^2)_\infty}$$
for the trivial group at $u=2$.
\end{exmp}

\begin{rem}
As mentioned in Remark~\ref{general set} we are not able to give a closed
formula for $P_{m,p}^{(u)}(G)$ for arbitrary $m$. However, for small $m$ and
$u$ and small groups $G$ it should be possible to calculate $P_{m,p}^{(u)}(G)$
by following the ideas above.
\end{rem}

One might expect that the case $\cO\neq\ZZ$ can be treated with similar
methods as those presented above. The main problem seems to be to find a
suitable adaptation of the concept of $u$-probability to the case of
$\fp$-torsion modules. The obvious approach does not seem to yield results
which are in agreement with computational data from \cite{Ma08,Ma10}.

%%%%%%%%%%%%%%%%%%%%%%%%%%%%%%%%%%%%%%%%%%%%%%%%%%%%%%%%%%%%%%%%%%%%%%%%%

\end{document}